\documentclass[reqno,11pt]{amsart}
\usepackage{lineno,amsmath,amsfonts,amsthm,amssymb,graphics,epsfig,mathrsfs, eufrak}
\usepackage{enumerate,geometry,etoolbox, scalerel, stackengine, relsize}
\usepackage{amscd,latexsym,url,upgreek,mathtools,color}
\usepackage{mathrsfs}
\usepackage{hyperref}
\usepackage[english]{babel}

\usepackage[utf8]{inputenc}
\usepackage[T1]{fontenc}

\usepackage{tikz}
\usepackage{booktabs}

\stackMath
\newcommand\reallywidehat[1]{%
\savestack{\tmpbox}{\stretchto{%
  \scaleto{%
    \scalerel*[\widthof{\ensuremath{#1}}]{\kern-.6pt\bigwedge\kern-.6pt}%
    {\rule[-\textheight/2]{1ex}{\textheight}}
  }{\textheight}%
}{0.5ex}}%
\stackon[1pt]{#1}{\tmpbox}%
}

\newtheorem{theorem}{Theorem}[section]
\newtheorem{lemma}[theorem]{Lemma}

\theoremstyle{definition}
\newtheorem{remark}[theorem]{{\bf Remark}}
\newtheorem{definition}[theorem]{Definition}

\title{On the approximation of Weierstrass function via superoscillations}

\author[F. Colombo]{F. Colombo}
\address{(FC) Politecnico di
Milano\\Dipartimento di Matematica\\Via E. Bonardi, 9\\20133 Milano\\Italy}
\email{fabrizio.colombo@polimi.it}

\author[I. Sabadini]{I. Sabadini}
\address{(IS) Politecnico di
Milano\\Dipartimento di Matematica\\Via E. Bonardi, 9\\20133 Milano\\Italy}
\email{irene.sabadini@polimi.it}

\author[D. C. Struppa]{D. C. Struppa}
\address{(DCS) The Donald Bren Presidential Chair in Mathematics\\ Chapman University, Orange, CA 92866 \\ USA}
\email{struppa@chapman.edu}

\thanks{F. Colombo and  I. Sabadini are supported by MUR grant Dipartimento di Eccellenza 2023-2027. D.C. Struppa thanks the Parker Kennedy Chair in Mathematics for research support.}

\begin{document}

\maketitle

\begin{abstract}
The Weierstrass function is a classic example of a continuous nowhere differentiable function, defined as a sum of high-frequency complex exponentials. In this paper, we follow a suggestion of M.V. Berry and study the convergence properties of Berry's superoscillating approximation to the truncated Weierstrass function. We provide sharp, explicit error estimates for this approximation and we analyze the subtle convergence properties of the associated double limits.
\end{abstract}

\medskip
AMS Classification 35A20

\medskip
Keywords: Superoscillations,
Weierstrass function,
Truncated Weierstrass function.

\section{Introduction}

Superoscillations have become, over the last 10--20 years, a significant object of study for both mathematicians \cite{colombo2017,BOREL,Pozzi} and physicists interested in quantum mechanics and optics \cite{AAV, aharonov1990,berry1994,Ahar-bohm-JMP,kempf2,kempf3,jordan,jordan2,kempf4}. A comprehensive introduction to the state of the art, at least as of a few years ago, is the Roadmap on Superoscillations \cite{berry2019roadmap}.

Roughly speaking, superoscillations are functions that, within a small interval, oscillate much faster than their highest Fourier frequency component. This apparently paradoxical property results from almost perfect destructive interference among the frequencies, and is made possible by the exponentially fast growth of such functions as soon as one leaves the superoscillation region.

The prototypical example of superoscillations (found in virtually every article on the subject) is given by the following expression, where $n$ is an integer and $\alpha$ is a large positive real parameter:
\begin{equation}\label{form1}
F_n(x;\alpha) := \left(\cos \frac{x}{n} + i\alpha \sin \frac{x}{n} \right)^n.
\end{equation}
By representing trigonometric functions in terms of exponentials, it is immediate to see that this function can be rewritten as:
\begin{equation}\label{form2}
F_n(x;\alpha) = \sum_{j=0}^n C_j(n;\alpha)e^{ix\lambda_{j,n}},
\end{equation}
where $C_j(n;\alpha) := \binom{n}{j} \left(\frac{\alpha+1}{2}\right)^{n-j} \left(\frac{1-\alpha}{2}\right)^j$ and $\lambda_{j,n} := 1-\frac{2j}{n}$.
Thus, $F_n$ is a simple superposition of waves with frequencies bounded between $-1$ and $1$. It is also evident that if $x$ is small, the function can be approximated by:
\begin{equation}\label{form3}
F_n(x;\alpha) \approx \left(1 + i\frac{\alpha x}{n}\right)^n \approx e^{i\alpha x}.
\end{equation}
Formulas \eqref{form2}, \eqref{form3} explain the term \textit{superoscillations}, as we have a function whose Fourier frequencies are bounded in $[-1,1]$, yet it approximates a function with an arbitrarily large frequency $\alpha$. To study this behavior, Berry introduced the notion of the \textit{local wave number} \cite{berry1994}, essentially the phase gradient, defined by:
\begin{equation}
k_{loc}(x) = \partial_x \text{arg}(F_n(x)) = \partial_x \text{Im}(\log(F_n(x))) = \frac{\alpha}{\cos^2 \frac{x}{n} + \alpha^2 \sin^2 \frac{x}{n}}.
\end{equation}
Superoscillations occur when $|k_{loc}(x)| > 1$, and it is $k_{loc}(x)$ that describes the strength of the effect. A simple calculation shows that the region of superoscillations is given by $|x| < n \arctan(1/\sqrt{\alpha})$ (assuming $\alpha > 1$), where $k_{loc}(x) > 1$.

Berry and his collaborators later explored whether superoscillations could represent fractal functions \cite{berry2016fractals, berry1980}. Specifically, in \cite{berry2016fractals}, Berry and Morley-Short consider the Weierstrass-Mandelbrot fractal function, showing that if it is truncated at an arbitrarily fine scale, it can be expressed to any desired accuracy using superoscillating functions.

The concrete example used in \cite{berry2016fractals} is the Weierstrass fractal defined as:
\begin{equation}
W(x, D, \gamma) = \sum_{n=0}^{\infty} \frac{\cos(\gamma^n x)}{\gamma^{n(2-D)}} \quad (\gamma > 1, 1 < D < 2),
\end{equation}
where $D$ is the fractal dimension and $\gamma$ is a scaling factor \cite{mandelbrot1982}. To create a band-limited approximation, they replace the high-frequency terms $\cos(\gamma^n x)$ with the real part:
\begin{equation}
S(x, \gamma^n, N) = \operatorname{Re}\left[ \left(\cos \frac{x}{N} + i \gamma^n \sin \frac{x}{N}\right)^N \right]
\end{equation}
of the canonical superoscillatory function \cite{ACSST11}. They demonstrate that this provides an excellent approximation of the truncated Weierstrass function. For the approximation to hold  over the interval $[-X, X]$, the parameter $N$ must satisfy $N \gg X^2(\gamma^{2n} - 1)$.

The authors propose a superoscillatory fractal function, $WS$, constructed by substituting $S$ into the Weierstrass sum:
\begin{equation}
WS(x, D, \gamma; X, K) = \sum_{n=0}^{\infty} \frac{S(x, \gamma^n, K(X, \gamma^n))}{\gamma^{n(2-D)}},
\end{equation}
where $K$ scales with the frequency $\gamma^n$ to ensure the approximation holds. Numerical simulations show that $WS$ faithfully reproduces the self-similar fractal structure of $W$ down to very fine scales. However, $WS$ grows exponentially outside the target interval, reaching values as high as $10^{11}$, implying extreme sensitivity to noise \cite{berry2017noise}.

Towards the end of their paper, the authors remark that ``...the precise mathematical sense in which $WS$ represents $W$ in the limit $K \to \infty$ would be an interesting project...''

This is the problem we address in this paper. We show that while $WS$ may not represent $W$ in the limit as $K \to \infty$ in a general sense, we can impose suitable conditions to ensure such a representation exists. We begin by analyzing the original Weierstrass function, constructed to prove the existence of a continuous but nowhere differentiable function \cite{weierstrass}:
\begin{equation}
f(x) = \sum_{m=0}^{\infty} a^m \cos(b^m \pi x),
\end{equation}
where $0 < a < 1$, $b$ is an odd integer, and $ab > 1 + \frac{3}{2}\pi$. In this work, we focus on the complex-valued version:
\begin{equation} \label{eq:weierstrass_complex}
W(x) = \sum_{m=0}^{\infty} a^m e^{i b^m \pi x}.
\end{equation}
We approximate the truncated series at index $N$:
\begin{equation}\label{troncata}
W_N(x) = \sum_{m=0}^{N} a^m e^{i b^m \pi x}.
\end{equation}
By replacing each high-frequency term $e^{i (b^m \pi) x}$ with the sequence $F_n(x; b^m \pi)$, we define:
\begin{equation} \label{eq:super_weierstrass}
\mathcal{W}_{N, n}(x) = \sum_{m=0}^{N} a^m \left[ \cos\left(\frac{x}{n}\right) + i (b^m \pi) \sin\left(\frac{x}{n}\right) \right]^n.
\end{equation}
As $n \to \infty$ with $N$ fixed, we recover the truncated Weierstrass function $W_N(x)$.

\medskip
We have structured this paper as follows. In Section \ref{sec:global_error} we establish some sharp and explicit error estimates for the superoscillating approximation of the truncated Weierstrass function: such estimates will be crucial in proving our main results. In Section \ref{sec:limits} we show that if we fix the superoscillating approximation to the exponential in the truncated Weierstrass function, and then we allow the truncation limit to go to infinity, we do not recover, in general the Weierstrass function; in other words, while the truncated Weierstrass function can be approximated by superoscillations, this is not true for the Weierstrass function itself. In Section \ref{SEC4}, however, we prove a theorem that we hope answers the original question of Berry and Morley-Short, and we demonstrate that if we allow the approximation index of the superoscillatory representation of the exponential, and the truncation index of the Weierstrass function, to both go to infinity in a precised proportion, then we do have a superoscillatory approximation of the Weierstrass function. The paper concludes with some remarks about the possibility of performing a similar procedure while employing different superoscillating sequences, and not necessarily the prototypical one described at the beginning of this introduction.

\section{Estimate of Truncated Weierstrass function with superoscillations} \label{sec:global_error}

Before proving the main approximation theorem, we establish a sharp, explicit estimate for the convergence of the superoscillating sequence to its limit.
We provide some preliminary elementary inequalities.

\begin{lemma}
Let $\gamma > 0$ and $y > -1$. Then the following estimate holds:
\[
|(1+y)^\gamma - 1| \leq \gamma |y| e^{\gamma |y|}
\]
\end{lemma}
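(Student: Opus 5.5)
The plan is to apply the mean value theorem to $g(s)=(1+s)^\gamma$ on the segment between $0$ and $y$, and to split into cases according to the sign of $y$ and whether $\gamma\ge 1$ or $\gamma<1$. In fact we expect the sharper bound $|(1+y)^\gamma-1|\le \gamma|y|e^{\gamma\max(y,0)}$, which implies the stated one.

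\emph{Case $y\ge 0$.} The mean value theorem gives $(1+y)^\gamma-1=\gamma y(1+\xi)^{\gamma-1}$ for some $\xi\in(0,y)$.
\begin{itemize}
\item If $\gamma\ge 1$, then $(1+\xi)^{\gamma-1}\le (1+y)^{\gamma-1}\le e^{(\gamma-1)y}\le e^{\gamma y}$, using $1+y\le e^y$.
\item If $0<\gamma<1$, then $(1+\xi)^{\gamma-1}\le 1\le e^{\gamma y}$.
\end{itemize}
In both subcases $|(1+y)^\gamma-1|\le \gamma|y|e^{\gamma|y|}$.

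\emph{Case $-1<y<0$.} Here $0<(1+y)^\gamma<1$, so the quantity to bound is $1-(1+y)^\gamma$. It is enough to show it is at most $\gamma|y|$.
\begin{itemize}
\item If $\gamma\ge 1$, the mean value theorem gives $\xi\in(y,0)$ with $1-(1+y)^\gamma=\gamma|y|(1+\xi)^{\gamma-1}$. Since $0<1+\xi<1$ and $\gamma-1\ge 0$, we have $(1+\xi)^{\gamma-1}\le 1$.
\item If $0<\gamma<1$, the mean value theorem is useless, because the derivative $\gamma(1+\xi)^{\gamma-1}$ blows up as $\xi\to-1$. Instead we use Bernoulli's inequality in its concave form, $(1+y)^\gamma\ge 1+\gamma y$ for $y>-1$ and $0<\gamma\le 1$. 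This follows from concavity of $s\mapsto(1+s)^\gamma$, whose graph lies above its tangent line at $s=0$. It gives $1-(1+y)^\gamma\le -\gamma y=\gamma|y|$.
\end{itemize}

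The only delicate point is the subcase $y<0$, $\gamma<1$. There one must avoid both the naive mean value estimate and the route through $\gamma\log(1+y)$, since $|\log(1+y)|$ is not controlled by $|y|$ near $-1$. The concavity argument handles it. Everything else is routine.
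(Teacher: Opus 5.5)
There is a genuine error in the subcase $0<\gamma<1$, $-1<y<0$. It cannot be repaired, because the lemma is false there.

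\emph{The step that fails.} A concave function lies \emph{below} its tangent lines. So concavity of $s\mapsto(1+s)^\gamma$ gives Bernoulli's inequality in the form $(1+y)^\gamma\le 1+\gamma y$ for $0<\gamma\le 1$, not $\ge$. For $y<0$ this yields $1-(1+y)^\gamma\ge\gamma|y|$, strictly when $\gamma<1$, which is the reverse of what you need. Hence your claimed sharper bound $\gamma|y|e^{\gamma\max(y,0)}$ fails at \emph{every} point of this regime.

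\emph{The statement itself fails.} As $y\to-1^+$ the left side tends to $1$, while the right side tends to $\gamma e^{\gamma}$, which is $<1$ for small $\gamma$. Concretely, take $\gamma=\tfrac12$ and $y=-0.99$. Then $|(1+y)^{1/2}-1|=0.9$, whereas $\gamma|y|e^{\gamma|y|}=0.495\,e^{0.495}<\tfrac12 e^{1/2}<0.825$ (using $1.65^2>e$).

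Your remaining three subcases are correct. For $y\ge0$ you use the mean value theorem with a split on $\gamma$. The paper instead uses $(1+y)^\gamma\le e^{\gamma y}$ together with $e^t-1\le te^t$, which covers all $\gamma>0$ at once. For $y<0$ and $\gamma\ge1$, your argument is essentially the paper's: it bounds $(1+t)^{\gamma-1}\le 1$ under the integral.

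The paper's proof also does not cover $\gamma<1$, $y<0$; it explicitly assumes $\gamma\ge1$ in that case. The correct statement therefore needs the extra hypothesis $\gamma\ge1$ or $y\ge0$. This is harmless for the Weierstrass application, where $\alpha_m=b^m\pi>1$ makes $y=(\alpha_m^2-1)\sin^2(x/n)\ge0$.
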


\begin{proof}
We distinguish two cases based on the sign of $y$. Note that if $y=0$, both sides are 0 and the inequality holds trivially.

\paragraph{Case 1: $y > 0$.}
Since $y > 0$, we have $1+y \leq e^y$. Because $\gamma > 0$, we can raise both sides to the power of $\gamma$:
\[
(1+y)^\gamma \leq (e^y)^\gamma = e^{\gamma y}.
\]
Subtracting $1$ from both sides, we get:
\[
(1+y)^\gamma - 1 \leq e^{\gamma y} - 1.
\]
We invoke the inequality $e^x - 1 \leq x e^x$, which holds for all $x \geq 0$ (derived from $e^x - 1 = \int_0^x e^t dt \leq \int_0^x e^x dt = x e^x$). Setting $x = \gamma y$:
\[
(1+y)^\gamma - 1 \leq \gamma y e^{\gamma y}.
\]
Since $y > 0$, we have $y = |y|$ and $(1+y)^\gamma - 1 > 0$. Thus:
\[
|(1+y)^\gamma - 1| \leq \gamma |y| e^{\gamma |y|}.
\]

\paragraph{Case 2: $-1 < y < 0$.}
Let us use the Fundamental Theorem of Calculus. Define
$$f(t) = (1+t)^\gamma.
$$ Then:
\[
(1+y)^\gamma - 1 = f(y) - f(0) = \int_0^y f'(t) \, dt = \int_0^y \gamma(1+t)^{\gamma-1} \, dt.
\]
Taking the absolute value:
\[
|(1+y)^\gamma - 1| = \left| \int_0^y \gamma(1+t)^{\gamma-1} \, dt \right|.
\]
Since $y < 0$, the interval is $[y, 0]$. We can rewrite the absolute value of the integral as:
\[
\left| \int_0^y \gamma(1+t)^{\gamma-1} \, dt \right| = \gamma \int_y^0 (1+t)^{\gamma-1} \, dt.
\]
We assume $\gamma \geq 1$ for the uniform bound (as the singularity at $y=-1$ for $\gamma < 1$ requires separated treatment). If $\gamma \geq 1$, then $\gamma - 1 \geq 0$. For $t \in [y, 0]$, we have $1+t \leq 1$. Consequently:
\[
(1+t)^{\gamma-1} \leq 1^{\gamma-1} = 1.
\]
Substituting this bound into the integral:
\[
\gamma \int_y^0 (1+t)^{\gamma-1} \, dt \leq \gamma \int_y^0 1 \, dt = \gamma (0 - y) = \gamma (-y) = \gamma |y|.
\]
Since $e^{\gamma|y|} \geq 1$ for any real argument, we surely have $\gamma |y| \leq \gamma |y| e^{\gamma |y|}$. Thus:
\[
|(1+y)^\gamma - 1| \leq \gamma |y| e^{\gamma |y|}.
\]
Combining both cases, the estimate holds.
\end{proof}

\begin{theorem}[Explicit error estimate] \label{thm:carnot}
Let $M > 0$ be a fixed real number and $\alpha \in \mathbb{R}$. Define the constants:
\[
    K(\alpha) = \frac{|\alpha^2-1|M^2}{2} \quad \text{and} \quad J(\alpha) = 2|\alpha(1-\alpha^2)|M^3.
\]
Then, for any integer $n \ge \frac{4M}{\pi}$, the following explicit estimate holds for all $|x| \le M$:
\begin{equation}
    \left| \left( \cos\frac{x}{n} + i \alpha \sin\frac{x}{n} \right)^n - e^{i \alpha x} \right| \leq \frac{1}{n} \sqrt{ K(\alpha)^2 e^{2K(\alpha)/n} + \frac{J(\alpha)^2}{n^2} e^{K(\alpha)/n} }.
\end{equation}
\end{theorem}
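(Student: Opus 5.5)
The approach is to write the base in polar form and treat modulus and phase separately. Set $t = x/n$, so $|t| \le M/n \le \pi/4$ by the hypothesis $n \ge 4M/\pi$; in particular $\cos t \ge 1/\sqrt2 > 0$. Write
\[
\cos t + i\alpha \sin t = \rho(t) e^{i\theta(t)}, \qquad \rho(t)^2 = \cos^2 t + \alpha^2 \sin^2 t = 1 + (\alpha^2-1)\sin^2 t, \qquad \theta(t) = \arctan(\alpha \tan t),
\]
where the arctangent branch is legitimate because $\cos t > 0$. Then $F_n(x;\alpha) = \rho^n e^{in\theta}$, and
\[
F_n - e^{i\alpha x} = e^{i\alpha x}\left[(\rho^n - 1)e^{i\phi} + (e^{i\phi}-1)\right], \qquad \phi := n\theta(x/n) - \alpha x .
\]
The target has the form $\frac1n\sqrt{A^2 + B^2}$. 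I will therefore look for an orthogonal-type decomposition, that is, expand $|\rho^n e^{i\phi} - 1|^2 = (\rho^n - 1)^2 + 2\rho^n(1-\cos\phi)$. This identity is the key simplification. It gives $|\cdot|^2 \le (\rho^n-1)^2 + \rho^n \phi^2$, using $1 - \cos\phi \le \phi^2/2$.

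\emph{Modulus term.} Write $\rho^n = (1+y)^{n/2}$ with $y = (\alpha^2-1)\sin^2 t > -1$, since $\rho^2 \ge \cos^2 t > 0$. Apply the preceding Lemma with $\gamma = n/2$. Case 2 of the Lemma assumed $\gamma \ge 1$, which holds for $n \ge 2$; the case $n=1$ with $y<0$ can be handled directly, since then $|\rho - 1| \le |y|$ by concavity. Using $\sin^2 t \le t^2 \le M^2/n^2$ we get $\gamma|y| \le \frac n2|\alpha^2-1|\frac{M^2}{n^2} = K(\alpha)/n$. Hence $|\rho^n - 1| \le \frac{K}{n} e^{K/n}$, which produces the first summand $\frac{K^2}{n^2}e^{2K/n}$.

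\emph{Phase term.} I also need $\rho^n \le e^{K/n}$, which follows from $(1+y)^{n/2} \le e^{ny/2}$. The real work is then showing $|\phi| \le J/n^2$, i.e.
\[
|\arctan(\alpha\tan t) - \alpha t| \le 2|\alpha(1-\alpha^2)|\,|t|^3 .
\]
Define $g(t) = \arctan(\alpha \tan t) - \alpha t$, so $g(0) = 0$. Its derivative is
\[
g'(t) = \frac{\alpha}{\cos^2 t + \alpha^2 \sin^2 t} - \alpha = \frac{\alpha(1-\alpha^2)\sin^2 t}{\rho^2} .
\]
Integrating from $0$ to $t$ gives $|g(t)| \le |\alpha(1-\alpha^2)|\,\frac{|t|^3}{3}\,\sup \rho^{-2}$. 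This is the main obstacle, because $\rho^{-2}$ must be controlled uniformly in $\alpha$. When $\alpha^2 \ge 1$ we have $\rho^2 \ge 1$. When $\alpha^2 < 1$ we have $\rho^2 \ge \cos^2 t \ge 1/2$ on $|t| \le \pi/4$; this is exactly where the hypothesis $n \ge 4M/\pi$ enters. In either case $\sup\rho^{-2} \le 2$, so $|g| \le \frac23|\alpha(1-\alpha^2)||t|^3 \le 2|\alpha(1-\alpha^2)||t|^3$, leaving room to spare. Multiplying by $n$ gives $|\phi| \le J/n^2$.

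Combining the two terms,
\[
|F_n - e^{i\alpha x}|^2 \le \frac{K^2}{n^2}e^{2K/n} + e^{K/n}\frac{J^2}{n^4},
\]
and taking square roots yields the stated estimate.
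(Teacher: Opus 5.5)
Your argument follows the paper's proof step for step. Both use the polar form, the law-of-cosines identity $|\rho^n e^{i\phi}-1|^2=(\rho^n-1)^2+2\rho^n(1-\cos\phi)$, the Lemma with $\gamma=n/2$ for the modulus, and a derivative bound for $g(u)=\arctan(\alpha\tan u)-\alpha u$. Your integration of $g'=\alpha(1-\alpha^2)\sin^2 u/\rho^2$ with $\rho^{-2}\le 2$ is equivalent to the paper's mean value step using $1+\alpha^2\tan^2\xi\ge 1$ and $\tan^2\xi\le 2\xi^2$. Your derivation of $\rho^n\le e^{K/n}$ from $(1+y)^{n/2}\le e^{ny/2}$ is the right one. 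The paper's appeal to $\rho_w\le 1+|\rho_w-1|$ only yields $1+\frac{K}{n}e^{K/n}$, which exceeds $e^{K/n}$.

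The one step that does not work is your patch for $n=1$ with $\alpha^2<1$. The paper's proof does not cover this case either, since its Lemma proves Case 2 only for $\gamma\ge 1$. Concavity gives $1-\rho\le |y|$, but you need $1-\rho\le Ke^{K}$, and you only know $|y|\le (1-\alpha^2)x^2\le 2K$. Since $n=1$ forces $M\le \pi/4$, you have $K\le\pi^2/32<\ln 2$, so $2K>Ke^K$ and the concavity bound cannot give the required estimate. Concretely, for $\alpha=0$ and $x=M=1/2$ one has $|y|=\sin^2(1/2)\approx 0.23$, while $Ke^K\approx 0.14$.

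The repair is short. For $\alpha^2<1$ one has $\cos t\le\rho\le 1$, hence
\[
1-\rho=\frac{(1-\alpha^2)\sin^2 t}{1+\rho}\le(1-\alpha^2)\,\frac{\sin^2 t}{1+\cos t}=(1-\alpha^2)(1-\cos t)\le\frac{(1-\alpha^2)t^2}{2}.
\]
Then $0\le 1-\rho^n\le n(1-\rho)\le \frac{(1-\alpha^2)x^2}{2n}\le\frac{K}{n}$ for every $n\ge1$. This handles the whole case $y<0$ without the Lemma, and with it your proof is complete.
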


\begin{proof}
Let $w := F_n(x, \alpha) = \left(\cos\frac{x}{n} + i\alpha \sin\frac{x}{n}\right)^n$ and $z := e^{i\alpha x}$. We aim to bound the Euclidean distance $|w-z|$ in the complex plane. Representing both numbers in polar form, $w = \rho_w e^{i\theta_w}$ and $z = \rho_z e^{i\theta_z}$, we have:
\begin{align*}
    \rho_w &= \left(\cos^2 \frac{x}{n} + \alpha^2 \sin^2 \frac{x}{n}\right)^{n/2} = \left( 1 + (\alpha^2-1)\sin^2 \frac{x}{n} \right)^{n/2}, \\
    \theta_w &= n \arctan \left(\alpha \tan \frac{x}{n}\right), \\
    \rho_z &= 1, \quad \theta_z = \alpha x.
\end{align*}
By the Carnot's Theorem in the complex plane:
\begin{equation} \label{eq:carnot_id}
    |w-z|^2 = \rho_w^2 + 1 - 2\rho_w \cos(\theta_w - \theta_z) = (\rho_w - 1)^2 + 2\rho_w(1 - \cos(\theta_w - \theta_z)).
\end{equation}
Using the standard Taylor inequality $1 - \cos y \leq \frac{y^2}{2}$, valid for all $y \in \mathbb{R}$, we obtain:
\begin{equation} \label{eq:main_ineq_proof}
    |w-z|^2 \leq (\rho_w - 1)^2 + \rho_w (\theta_w - \theta_z)^2.
\end{equation}

\medskip
We estimate of the modulus $\rho_w$
using the inequality $|(1+y)^\gamma - 1| \leq \gamma |y| e^{\gamma |y|}$ for $\gamma > 0$ and $y > -1$. Setting $\gamma = n/2$ and $y = (\alpha^2-1)\sin^2(x/n)$, and noting that $|\sin u| \le |u|$, we find:
\[
    |\rho_w - 1| \leq \frac{n}{2} |\alpha^2-1| \frac{x^2}{n^2} \exp\left( \frac{n}{2} |\alpha^2-1| \frac{x^2}{n^2} \right) \leq \frac{|\alpha^2-1|M^2}{2n} \exp\left( \frac{|\alpha^2-1|M^2}{2n} \right).
\]
By substituting $K(\alpha) = \frac{|\alpha^2-1|M^2}{2}$, we obtain:
\begin{equation} \label{eq:rho_est}
    |\rho_w - 1| \leq \frac{K(\alpha)}{n} e^{K(\alpha)/n}.
\end{equation}
Furthermore, since $\rho_w = (\rho_w - 1) + 1 \le |\rho_w - 1| + 1$, it follows from the growth of the exponential that $\rho_w \leq e^{K(\alpha)/n}$.

\medskip
Now we estimate of the phase difference $\Delta \theta = \theta_w - \theta_z$
define
$$h(u) = n \arctan(\alpha \tan u) - n \alpha u,
$$
 such that $\Delta \theta = h(x/n)$. Let $g(u) = \arctan(\alpha \tan u) - \alpha u$. Its derivative is:
\[
    g'(u) = \frac{1}{1 + (\alpha \tan u)^2} \cdot \alpha \sec^2 u - \alpha = \alpha \left( \frac{\sec^2 u - (1 + \alpha^2 \tan^2 u)}{1 + \alpha^2 \tan^2 u} \right) = \frac{\alpha(1-\alpha^2)\tan^2 u}{1+\alpha^2 \tan^2 u}.
\]
By the Mean Value Theorem, there exists $\xi$ between $0$ and $x/n$ such that
$$|g(x/n) - g(0)| = |x/n| |g'(\xi)|.
$$
 Since $n \ge 4M/\pi$, we have $|\xi| < |x/n| \le \pi/4$. In this range, $\tan^2 \xi \le 2\xi^2$ (since $\tan \xi \approx \xi$ and $1+\alpha^2 \tan^2 \xi \ge 1$). Thus:
\[
    |g'(\xi)| \leq |\alpha(1-\alpha^2)| \tan^2 \xi \leq 2 |\alpha(1-\alpha^2)| \xi^2 \leq 2 |\alpha(1-\alpha^2)| \frac{x^2}{n^2}.
\]
Then
$$|\Delta \theta| = n |g(x/n)| \leq n \cdot \frac{|x|}{n} \cdot 2 |\alpha(1-\alpha^2)| \frac{x^2}{n^2} \leq \frac{2 |\alpha(1-\alpha^2)| M^3}{n^2}.
$$
Setting $J(\alpha) = 2|\alpha(1-\alpha^2)|M^3$, we get $|\Delta \theta| \leq \frac{J(\alpha)}{n^2}$.

\medskip
Substituting the estimates into \eqref{eq:main_ineq_proof}:
\[
    |w-z|^2 \leq \left( \frac{K(\alpha)}{n} e^{K(\alpha)/n} \right)^2 + e^{K(\alpha)/n} \left( \frac{J(\alpha)}{n^2} \right)^2.
\]
Factoring out $1/n^2$ from the entire expression:
\[
    |w-z|^2 \leq \frac{1}{n^2} \left[ K(\alpha)^2 e^{2K(\alpha)/n} + \frac{J(\alpha)^2}{n^2} e^{K(\alpha)/n} \right].
\]
Taking the square root of both sides yields the desired inequality.
\end{proof}

We consider the complex-valued truncated Weierstrass function and its approximation via superoscillating sequences.

\begin{definition}[Truncated Weierstrass Function]
Let $0 < a < 1$ and $b$ be an odd integer such that $ab > 1 + \frac{3}{2}\pi$. For a fixed integer $N \in \mathbb{N}$, the truncated Weierstrass function $W_N(x)$ is defined as:
\begin{equation} \label{eq:truncated_weierstrass}
    W_N(x) = \sum_{m=0}^{N} a^m e^{i \alpha_m x}, \quad \text{where } \alpha_m = b^m \pi.
\end{equation}
\end{definition}

\begin{definition}[Superoscillating Approximation]
The superoscillating approximation of order $n$ for the truncated Weierstrass function is defined as:
\begin{equation} \label{eq:super_weierstrass}
    \mathcal{W}_{N, n}(x) = \sum_{m=0}^{N} a^m \left[ \cos\left(\frac{x}{n}\right) + i \alpha_m \sin\left(\frac{x}{n}\right) \right]^n.
\end{equation}
\end{definition}

It is easily verified that by the fundamental limit $\lim_{n \to \infty} (1 + \frac{i \alpha x}{n} + O(n^{-2}))^n = e^{i \alpha x}$, we recover the  function:
\begin{equation} \label{eq:super_weierstrassLIM}
    \lim_{n \to \infty} \mathcal{W}_{N, n}(x) = W_N(x).
\end{equation}

\begin{theorem}[Global Approximation Error]
Let $M > 0$ and $N$ be fixed. Let $\alpha_m = b^m \pi$ and define the maximal constant
$$K_{\max} = \frac{(\alpha_N^2 - 1)M^2}{2}.
$$
For any integer $n \ge \max\left( \frac{4M}{\pi}, K_{\max} \right)$,
the approximation error satisfies:
\begin{equation}
    \sup_{|x|\leq M} |W_N(x) - \mathcal{W}_{N,n}(x)| \leq \frac{e}{n} \mathcal{S}_1 + \frac{\sqrt{e}}{n^2} \mathcal{S}_2,
\end{equation}
where $\mathcal{S}_1$ and $\mathcal{S}_2$ are given by:
\begin{equation}\label{esse1}
    \mathcal{S}_1(N) = \frac{M^2}{2} \left[ \pi^2 \frac{(ab^2)^{N+1}-1}{ab^2-1} - \frac{1-a^{N+1}}{1-a} \right],
\end{equation}
\begin{equation}\label{esse2}
    \mathcal{S}_2(N) = 2M^3 \left[ \pi^3 \frac{(ab^3)^{N+1}-1}{ab^3-1} - \pi \frac{(ab)^{N+1}-1}{ab-1} \right].
\end{equation}
\end{theorem}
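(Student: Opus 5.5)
The plan is to apply the triangle inequality term by term and then invoke Theorem~\ref{thm:carnot} for each frequency $\alpha_m=b^m\pi$, $m=0,\dots,N$. Writing
\[
|W_N(x)-\mathcal{W}_{N,n}(x)|\le \sum_{m=0}^N a^m\left|\left(\cos\tfrac{x}{n}+i\alpha_m\sin\tfrac{x}{n}\right)^n-e^{i\alpha_m x}\right|,
\]
and using $n\ge 4M/\pi$, Theorem~\ref{thm:carnot} bounds each summand uniformly in $|x|\le M$. We then apply $\sqrt{A+B}\le\sqrt A+\sqrt B$ to obtain
\[
\left|F_n(x;\alpha_m)-e^{i\alpha_m x}\right|\le \frac{K(\alpha_m)}{n}e^{K(\alpha_m)/n}+\frac{J(\alpha_m)}{n^2}e^{K(\alpha_m)/(2n)}.
\]

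Next we control the exponentials uniformly in $m$. Since $\alpha_m=b^m\pi\ge\pi>1$, we have $|\alpha_m^2-1|=\alpha_m^2-1$, and this quantity is increasing in $m$. Hence $K(\alpha_m)\le K(\alpha_N)=K_{\max}$. The hypothesis $n\ge K_{\max}$ then gives $K(\alpha_m)/n\le 1$, so $e^{K(\alpha_m)/n}\le e$ and $e^{K(\alpha_m)/(2n)}\le\sqrt e$. This yields
\[
\sup_{|x|\le M}|W_N-\mathcal{W}_{N,n}|\le \frac{e}{n}\sum_{m=0}^N a^mK(\alpha_m)+\frac{\sqrt e}{n^2}\sum_{m=0}^N a^m J(\alpha_m).
\]

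Finally we evaluate the two sums as geometric series. For the first,
\[
\sum_{m=0}^N a^m\frac{M^2}{2}(\pi^2b^{2m}-1)=\frac{M^2}{2}\left[\pi^2\sum_{m=0}^N(ab^2)^m-\sum_{m=0}^N a^m\right],
\]
which equals $\mathcal{S}_1(N)$. Here $ab^2\ne1$ because $ab>1$ and $b>1$, and $a<1$. For the second, $|\alpha_m(1-\alpha_m^2)|=\alpha_m^3-\alpha_m$ since $\alpha_m>1$. Therefore
\[
\sum_{m=0}^N a^m\cdot 2M^3(\pi^3b^{3m}-\pi b^m)=2M^3\left[\pi^3\sum_{m=0}^N(ab^3)^m-\pi\sum_{m=0}^N(ab)^m\right]=\mathcal{S}_2(N),
\]
where $ab\ne1$ and $ab^3\ne1$.

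No step is a real obstacle; the proof is bookkeeping. The points that need care are the uniform bound on the exponential factors, which is exactly where $n\ge K_{\max}$ and the monotonicity of $K(\alpha_m)$ in $m$ enter, and the sign determination that removes the absolute values using $\alpha_m>1$. We also need the small step of splitting the square root, or alternatively bounding $\sqrt{A+B}$ by $\sqrt{A}+\sqrt{B}$ directly inside the sum.
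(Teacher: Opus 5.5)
Your proposal is correct and follows the paper's proof step for step: the triangle inequality, Theorem~\ref{thm:carnot} applied to each $\alpha_m$, the splitting $\sqrt{A^2+B^2}\le A+B$, the bound $K_m/n\le K_{\max}/n\le 1$ to replace the exponential factors by $e$ and $\sqrt{e}$, and the two geometric sums. You also check explicitly that $\alpha_m>1$, which drops the absolute values and makes $K_m$ increasing in $m$, and that $ab$, $ab^2$, $ab^3\neq 1$; the paper uses both facts without stating them.
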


\begin{proof}
Define the point-wise error function $E_{N,n}(x) = |W_N(x) - \mathcal{W}_{N,n}(x)|$. By substituting definitions \eqref{eq:truncated_weierstrass} and \eqref{eq:super_weierstrass} and applying the triangle inequality, we bound the total error by the sum of individual term errors:
\begin{equation} \label{eq:triangle_step}
    E_{N,n}(x) \leq \sum_{m=0}^{N} a^m \left| e^{i \alpha_m x} - \left( \cos\frac{x}{n} + i \alpha_m \sin\frac{x}{n} \right)^n \right|.
\end{equation}
Applying Theorem \ref{thm:carnot} to each frequency $\alpha_m$, we have:
\[
    E_{N,n}(x) \leq \sum_{m=0}^{N} a^m \left[ \frac{1}{n} \sqrt{ K_m^2 e^{2K_m/n} + \frac{J_m^2}{n^2} e^{K_m/n} } \right],
\]
where $K_m = \frac{|\alpha_m^2-1|M^2}{2}$ and $J_m = 2|\alpha_m(1-\alpha_m^2)|M^3$.
Using the subadditivity of the square root, $\sqrt{A^2 + B^2} \leq A + B$, for nonnegative $A,B$, we obtain:
\[
    E_{N,n}(x) \leq \frac{1}{n} \sum_{m=0}^{N} a^m \left( K_m e^{K_m/n} + \frac{J_m}{n} e^{K_m/2n} \right).
\]
Under the constraint $n \ge K_{\max} \ge K_m$, it follows that $K_m/n \le 1$. Thus, $e^{K_m/n} \le e$ and $e^{K_m/2n} \le \sqrt{e}$. Factoring these constants out of the sum:
\[
    \sup_{|x|\le M} E_{N,n}(x) \leq \frac{e}{n} \sum_{m=0}^{N} a^m K_m + \frac{\sqrt{e}}{n^2} \sum_{m=0}^{N} a^m J_m.
\]

We now compute of $\mathcal{S}_1$ substituting $K_m = \frac{M^2}{2}(\pi^2 b^{2m} - 1)$:
\begin{align*}
    \mathcal{S}_1 &= \sum_{m=0}^{N} a^m \frac{M^2}{2}(\pi^2 b^{2m} - 1) = \frac{M^2 \pi^2}{2} \sum_{m=0}^{N} (ab^2)^m - \frac{M^2}{2} \sum_{m=0}^{N} a^m.
\end{align*}
Applying the geometric series formula $\sum_{k=0}^N r^k = \frac{r^{N+1}-1}{r-1}$:
\[
    \mathcal{S}_1 = \frac{M^2 \pi^2}{2} \left( \frac{(ab^2)^{N+1}-1}{ab^2-1} \right) - \frac{M^2}{2} \left( \frac{a^{N+1}-1}{a-1} \right).
\]
Factoring $M^2/2$ and adjusting signs leads to the expression in the theorem.
To compute $\mathcal{S}_2$ we
substitute  $J_m = 2M^3(\pi^3 b^{3m} - \pi b^m)$:
\begin{align*}
    \mathcal{S}_2 &= \sum_{m=0}^{N} a^m [ 2M^3 (\pi^3 b^{3m} - \pi b^m) ] = 2M^3 \pi^3 \sum_{m=0}^{N} (ab^3)^m - 2M^3 \pi \sum_{m=0}^{N} (ab)^m.
\end{align*}
Using the geometric series summation once more:
\[
    \mathcal{S}_2 = 2M^3 \pi^3 \left( \frac{(ab^3)^{N+1}-1}{ab^3-1} \right) - 2M^3 \pi \left( \frac{(ab)^{N+1}-1}{ab-1} \right).
\]
This completes the detailed derivation.
\end{proof}

\section{Divergence of Fixed-$n$ Limit} \label{sec:limits}
In this section we show that while the truncated Weierstrass function can be approximated with superoscillations, the Weierstrass function cannot if one keeps the superoscillating index $n$ fixed.

\begin{theorem}[Divergence of the Limit $N \to \infty$] \label{thm:divergence}
Let $n \geq 1$ be a fixed integer and let $M > 0$ such that $M < n\pi$. Consider the sequence of functions defined by the partial sums:
\begin{equation} \label{eq:super_weierstrass_sum}
    \mathcal{W}_{N, n}(x) = \sum_{m=0}^{N} a^m \left[ \cos\left(\frac{x}{n}\right) + i \alpha_m \sin\left(\frac{x}{n}\right) \right]^n,
\end{equation}
where $\alpha_m = b^m \pi$, $0 < a < 1$, and $b > 1$.
Then, the infinite series $\lim_{N \to \infty} \mathcal{W}_{N, n}(x)$ behaves as follows:
\begin{enumerate}
    \item For $x = 0$, the series converges absolutely to $\frac{1}{1-a}$.
    \item For any $x \in [-M, M] \setminus \{0\}$, the series diverges.
\end{enumerate}
\end{theorem}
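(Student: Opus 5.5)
The plan is to look at the size of the individual terms of the series, since a series whose terms do not tend to zero cannot converge. Write $u = x/n$. The hypothesis $0 < |x| \le M < n\pi$ gives $0 < |u| < \pi$, and hence $\sin u \neq 0$. The $m$-th term of the series is
\[
T_m(x) = a^m \left[ \cos u + i \alpha_m \sin u \right]^n .
\]

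\textbf{Case $x=0$.} Here $\cos 0 = 1$ and $\sin 0 = 0$, so $T_m(0) = a^m$. The series is geometric and, since $0<a<1$, converges absolutely to $\frac{1}{1-a}$. This settles item (1).

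\textbf{Case $x \neq 0$.} I compute the modulus exactly, as in the proof of Theorem \ref{thm:carnot}:
\[
|T_m(x)| = a^m \left( \cos^2 u + \alpha_m^2 \sin^2 u \right)^{n/2} \ge a^m \, \alpha_m^n \, |\sin u|^n = \pi^n |\sin u|^n \,(a b^n)^m .
\]
The constant $c(x) = \pi^n |\sin(x/n)|^n$ is strictly positive and does not depend on $m$. So the terms fail to tend to zero, and in fact blow up geometrically, as soon as $ab^n \ge 1$. In that case $T_m(x) \not\to 0$, the partial sums $\mathcal{W}_{N,n}(x)$ are not Cauchy, and the series diverges. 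This is item (2).

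\textbf{The condition $ab^n \ge 1$.} The one real obstacle is to make sure this holds for every fixed $n \ge 1$, which it does in the Weierstrass setting. The standing assumption $ab > 1 + \frac{3}{2}\pi > 1$ from the definition of the truncated Weierstrass function, together with $b>1$, gives $ab^n \ge ab > 1$. With only $0<a<1$ and $b>1$, as literally written in the statement, this would fail for $n=1$ and $ab<1$: the series would then converge absolutely, since $|T_m(x)| \le a^m (1+\alpha_m^2)^{1/2} \le C(ab)^m$.

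So I will state explicitly that the Weierstrass hypothesis $ab>1$ is in force. Under it I get the lower bound $|T_m(x)| \ge c(x) (ab)^m \to \infty$. This gives divergence at every nonzero $x \in [-M,M]$, and indeed pointwise unboundedness of the partial sums. The last claim follows because $|\mathcal{W}_{N,n}(x) - \mathcal{W}_{N-1,n}(x)| = |T_N(x)| \to \infty$.
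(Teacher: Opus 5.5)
Your proof is correct, and it takes a slightly different route from the paper. The paper proves item (2) with the ratio test: it divides through by $b^{2m}$ to get $\lim_{m\to\infty}|u_{m+1}(x)/u_m(x)| = ab^n$. You instead bound each term from below, $|T_m(x)| \ge \pi^n|\sin(x/n)|^n (ab^n)^m$, and apply the term test.

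Both arguments detect the same geometric growth rate $ab^n$. Both also need the standing Weierstrass hypothesis $ab > 1+\frac{3}{2}\pi$, which the statement omits; the paper's proof imports it explicitly in its last step, exactly as you do.

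Your route is slightly more elementary and somewhat stronger. It requires no limit computation and no check that $u_m(x)\neq 0$, which the ratio test tacitly needs. It gives an explicit quantitative lower bound. It also covers the borderline case $ab^n=1$, where the ratio test is inconclusive but your bound still keeps the terms away from zero.

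Your remark about the literal statement is also correct. With only $0<a<1$ and $b>1$, the claim fails whenever $ab^n<1$: then $|T_m(x)|\le (1+\pi)^n(ab^n)^m$, so the series converges absolutely. This is a genuine imprecision in the statement as written, not a defect in your argument.
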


\begin{proof}
Let $u_m(x)$ denote the general term of the series:
\[
    u_m(x) = a^m \left[ \cos\left(\frac{x}{n}\right) + i b^m \pi \sin\left(\frac{x}{n}\right) \right]^n.
\]

\paragraph{Case 1: $x = 0$.}
Substituting $x=0$ into the expression for $u_m(x)$, we note that $\cos(0)=1$ and $\sin(0)=0$. The term simplifies to:
\[ u_m(0) = a^m [ 1 + i(0) ]^n = a^m \cdot 1^n = a^m. \]
The series becomes $\sum_{m=0}^{\infty} a^m$. Since $0 < a < 1$, this is a convergent geometric series. Its sum is given by the well-known formula:
\[ \mathcal{W}_{\infty, n}(0) = \sum_{m=0}^{\infty} a^m = \frac{1}{1-a}. \]

\paragraph{Case 2: $x \neq 0$.}
Assume $x \in [-M, M] \setminus \{0\}$. Since $M < n\pi$, we have $0 < |x/n| < \pi$, which implies that $\sin(x/n) \neq 0$. To study the convergence of $\sum u_m(x)$, we examine the limit of the modulus of the general term using the Ratio Test. Let $\rho_m(x)$ be the ratio of the moduli of consecutive terms:
\[
    \rho_m(x) = \left| \frac{u_{m+1}(x)}{u_m(x)} \right| = \frac{a^{m+1}}{a^m} \left| \frac{\cos(x/n) + i b^{m+1} \pi \sin(x/n)}{\cos(x/n) + i b^m \pi \sin(x/n)} \right|^n.
\]
We simplify the expression inside the modulus:
\[
    \rho_m(x) = a \left( \frac{|\cos(x/n) + i b^{m+1} \pi \sin(x/n)|}{|\cos(x/n) + i b^m \pi \sin(x/n)|} \right)^n,
\]
and
\[
    \rho_m(x) = a \left( \sqrt{ \frac{\cos^2(x/n) + b^{2m+2} \pi^2 \sin^2(x/n)}{\cos^2(x/n) + b^{2m} \pi^2 \sin^2(x/n)} } \right)^n.
\]
To find the limit as $m \to \infty$, we divide both the numerator and the denominator inside the square root by the dominant term $b^{2m}$:
\[
    \rho_m(x) = a \left( \sqrt{ \frac{\frac{\cos^2(x/n)}{b^{2m}} + b^2 \pi^2 \sin^2(x/n)}{\frac{\cos^2(x/n)}{b^{2m}} + \pi^2 \sin^2(x/n)} } \right)^n.
\]
As $m \to \infty$, the terms involving $1/b^{2m}$ vanish since $b > 1$. Thus:
\[
    L = \lim_{m \to \infty} \rho_m(x) = a \left( \sqrt{ \frac{b^2 \pi^2 \sin^2(x/n)}{\pi^2 \sin^2(x/n)} } \right)^n = a ( \sqrt{b^2} )^n = a b^n.
\]
Given the Weierstrass condition $ab > 1 + \frac{3}{2}\pi$, it is clear that $ab > 1$. Since $b > 1$ and $n \ge 1$, we have:
\[ L = a b^n \ge ab > 1. \]
According to the Ratio Test, if the limit of the ratio of the moduli is strictly greater than 1, the series $\mathcal{W}_{N, n}(x)$ diverges for any $x \neq 0$.
\end{proof}

We investigate the sequential limit where the oscillation parameter $n$ is taken to infinity before the truncation order $N$. This order of operations corresponds to approximating each individual frequency component before summing the entire series.

\begin{theorem}[Convergence of the Iterated Limit]
Let $\mathcal{W}_{N, n}(x)$ be the superoscillating approximation of the Weierstrass function defined by:
\begin{equation} \label{eq:super_weierstrass_def}
    \mathcal{W}_{N, n}(x) = \sum_{m=0}^{N} a^m \left[ \cos\left(\frac{x}{n}\right) + i (b^m \pi) \sin\left(\frac{x}{n}\right) \right]^n,
\end{equation}
where $0 < a < 1$ and $b > 1$. Then, for any $x \in \mathbb{R}$, the iterated limit converges to the Weierstrass function $W(x) = \sum_{m=0}^{\infty} a^m e^{i b^m \pi x}$:
\begin{equation}
    \lim_{N \to \infty} \left( \lim_{n \to \infty} \mathcal{W}_{N, n}(x) \right) = W(x).
\end{equation}
\end{theorem}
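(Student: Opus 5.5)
The plan is to evaluate the inner limit exactly for each fixed $N$, and then recognize the outer limit as the convergence of the partial sums of an absolutely convergent series.

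\emph{Step 1: the inner limit.} Fix $x\in\mathbb{R}$ and $N$. Since $\mathcal{W}_{N,n}(x)$ is a finite sum of $N+1$ terms, the limit as $n\to\infty$ may be taken term by term. For each fixed $m\le N$, choose $M\ge |x|$, for instance $M=|x|+1$. Theorem \ref{thm:carnot}, applied with $\alpha=\alpha_m=b^m\pi$, gives for all $n\ge 4M/\pi$
\[
\left|\left(\cos\tfrac{x}{n}+i\alpha_m\sin\tfrac{x}{n}\right)^n-e^{i\alpha_m x}\right|\le \frac{1}{n}\sqrt{K(\alpha_m)^2e^{2K(\alpha_m)/n}+\frac{J(\alpha_m)^2}{n^2}e^{K(\alpha_m)/n}}.
\]
The right-hand side tends to $0$ as $n\to\infty$, because $K(\alpha_m)$ and $J(\alpha_m)$ do not depend on $n$. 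Alternatively, the Global Approximation Error theorem gives this directly, since $\mathcal S_1(N)$ and $\mathcal S_2(N)$ are fixed once $N$ is fixed. Hence
\[
\lim_{n\to\infty}\mathcal{W}_{N,n}(x)=W_N(x)=\sum_{m=0}^N a^m e^{ib^m\pi x},
\]
which is \eqref{eq:super_weierstrassLIM}.

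\emph{Step 2: the outer limit.} It remains to show that $W_N(x)\to W(x)$ as $N\to\infty$. Every term satisfies $|a^m e^{ib^m\pi x}|=a^m$, and $\sum a^m$ converges because $0<a<1$. So the series defining $W$ converges absolutely, and indeed uniformly on $\mathbb{R}$ by the Weierstrass M-test. The tail estimate
\[
|W(x)-W_N(x)|\le \sum_{m>N}a^m=\frac{a^{N+1}}{1-a}\longrightarrow 0
\]
finishes the argument.

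\emph{Where care is needed.} There is no real obstacle here. The only point requiring attention is the order of the limits. In Step 1, $N$ is fixed, so the constants $K(\alpha_N)$, $J(\alpha_N)$, $\mathcal S_1(N)$ and $\mathcal S_2(N)$ are all finite, even though they blow up as $N$ grows. This is exactly why taking $n\to\infty$ first works, in contrast with Theorem \ref{thm:divergence}, where $n$ is held fixed. It is also worth noting that Step 1 needs no restriction on $x$: any real $x$ lies in some interval $[-M,M]$, and $n$ can be taken larger than $4M/\pi$.
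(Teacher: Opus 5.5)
Your proposal is correct and is the argument the paper has in mind when it says the proof is immediate. For fixed $N$, the inner limit is the finite sum $W_N(x)$ by \eqref{eq:super_weierstrassLIM}, which you justify term by term via Theorem~\ref{thm:carnot} with $M=|x|+1$; the outer limit then follows from the tail bound $|W(x)-W_N(x)|\le a^{N+1}/(1-a)$, and your remark that the $N$-dependent constants are harmless because $N$ is frozen while $n\to\infty$ is exactly the right point.
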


\begin{proof}
The proof is immediate.
\end{proof}

\begin{remark}[Non-Commutativity of Limits]
It is important to emphasize that the order of limits is not interchangeable. As established in previous results, the limit $\lim_{N \to \infty} \mathcal{W}_{N, n}(x)$ diverges for any fixed $n$ and $x \neq 0$. Therefore:
\[
    \lim_{N \to \infty} \lim_{n \to \infty} \mathcal{W}_{N, n}(x) \neq \lim_{n \to \infty} \lim_{N \to \infty} \mathcal{W}_{N, n}(x),
\]
illustrating a failure due to the lack of uniform convergence in $n$ as $N \to \infty$.
\end{remark}

\section{The Joint Convergence Theorem}\label{SEC4}

In this section, we answer the original question asked by M. V. Berry in \cite{berry2016fractals} and we establish the conditions under which the superoscillating approximation $\mathcal{W}_{N, n}(x)$ converges to the Weierstrass function $W(x)$ as both the truncation order $N$ and the oscillation parameter $n$ tend to infinity simultaneously.

\begin{theorem}[Joint Convergence]
Let $M > 0$, $0 < a < 1$, and $b > 1$ be an odd integer. Let $W(x) = \sum_{m=0}^{\infty} a^m e^{i b^m \pi x}$ be the Weierstrass function. If the sequence of integers $\{n_N\}_{N \in \mathbb{N}}$ satisfies the growth condition
\begin{equation} \label{eq:growth_condition}
    \lim_{N \to \infty} \frac{(ab^3)^N}{n_N} = 0,
\end{equation}
then the sequence of functions $\mathcal{W}_{N, n_N}(x)$ converges uniformly to $W(x)$ on the interval $[-M, M]$:
\begin{equation}
    \lim_{N \to \infty} \sup_{|x| \le M} \left| \mathcal{W}_{N, n_N}(x) - W(x) \right| = 0.
\end{equation}
\end{theorem}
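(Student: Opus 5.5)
The plan is to split the error into a truncation tail and an approximation error. By the triangle inequality, for $|x|\le M$,
\[
\left|\mathcal{W}_{N,n_N}(x)-W(x)\right| \le \left|\mathcal{W}_{N,n_N}(x)-W_N(x)\right| + \left|W_N(x)-W(x)\right| .
\]
The second term is the tail of the series. It is bounded uniformly in $x$ by $\sum_{m>N} a^m = a^{N+1}/(1-a)$, which tends to $0$ because $0<a<1$. All the work therefore goes into the first term, which I will control with the Global Approximation Error theorem applied with $n=n_N$.

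\textbf{Step 1: the hypothesis of the Global Approximation Error theorem holds for large $N$.} That theorem requires $n_N \ge \max(4M/\pi, K_{\max})$, where $K_{\max} = (\pi^2 b^{2N}-1)M^2/2 \le \pi^2 M^2 b^{2N}/2$. Here I use the standing Weierstrass assumption $ab>1$ from the definition of the truncated function; it is implied by $ab>1+\tfrac32\pi$. This gives $ab^3 > b^2$, hence
\[
\frac{b^{2N}}{n_N} = \frac{(ab^3)^N}{n_N}\,(ab)^{-N} \le \frac{(ab^3)^N}{n_N} \longrightarrow 0 .
\]
So $n_N/K_{\max}\to\infty$. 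Since the growth condition \eqref{eq:growth_condition} also forces $n_N\to\infty$, we get $n_N \ge \max(4M/\pi, K_{\max})$ for all $N$ beyond some $N_0$. I expect this to be the main obstacle: the statement lists only $0<a<1$ and $b>1$ odd, and without $ab>1$ the condition \eqref{eq:growth_condition} need not dominate $b^{2N}$. If one wanted to drop $ab>1$, I would first try returning to Theorem \ref{thm:carnot} termwise. The difficulty is the factor $e^{K_m/n_N}$, which is no longer bounded for the top frequencies, so I would keep the assumption $ab>1$, which is part of the paper's setting.

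\textbf{Step 2: the bound tends to zero.} For $N\ge N_0$,
\[
\sup_{|x|\le M}\left|W_N(x)-\mathcal{W}_{N,n_N}(x)\right| \le \frac{e}{n_N}\mathcal{S}_1(N) + \frac{\sqrt e}{n_N^2}\mathcal{S}_2(N).
\]
Since $ab^2>1$ and $ab^3>1$, formulas \eqref{esse1} and \eqref{esse2} give $\mathcal{S}_1(N)\le C_1 (ab^2)^N$ and $\mathcal{S}_2(N)\le C_2 (ab^3)^N$, with constants depending only on $M,a,b$; the subtracted terms are nonnegative and can be dropped. Then
\[
\frac{\mathcal{S}_1(N)}{n_N} \le C_1\frac{(ab^2)^N}{n_N} \le C_1\frac{(ab^3)^N}{n_N}\to 0,
\qquad
\frac{\mathcal{S}_2(N)}{n_N^2} \le C_2\frac{(ab^3)^N}{n_N}\cdot\frac{1}{n_N}\to 0 .
\]
Combining this with the tail estimate gives the uniform convergence on $[-M,M]$.
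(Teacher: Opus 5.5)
Your proposal is correct and follows essentially the same route as the paper. Both split the error by the triangle inequality into the tail, bounded by $a^{N+1}/(1-a)$, and the approximation error, controlled by the Global Approximation Error theorem together with $\mathcal{S}_1(N)=O((ab^2)^N)$ and $\mathcal{S}_2(N)=O((ab^3)^N)$.

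Your Step~1 adds rigor the paper's proof lacks. The paper invokes that theorem without checking that $n_N\ge\max(4M/\pi,K_{\max})$ eventually holds. You are also right that $ab>1$ is genuinely needed: if $ab^3<1$, the constant choice $n_N=1$ satisfies the growth condition, yet $\mathcal{W}_{N,1}$ does not converge to $W$.
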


\begin{proof}
Let $x \in [-M, M]$. We define the total error as $\mathcal{E}(N) = \left| \mathcal{W}_{N, n_N}(x) - W(x) \right|$. By the triangle inequality, we decompose this error into two parts:
\begin{equation} \label{eq:triangle_split}
    \mathcal{E}(N) \leq \underbrace{\left| \mathcal{W}_{N, n_N}(x) - W_N(x) \right|}_{E_1(N)} + \underbrace{\left| W_N(x) - W(x) \right|}_{E_2(N)},
\end{equation}
where $W_N(x) = \sum_{m=0}^{N} a^m e^{i b^m \pi x}$ is the $N$-th partial sum of the Weierstrass series.

\medskip
 Estimate of the Truncation Error ($E_2$).
The term $E_2(N)$ represents the tail of the Weierstrass series. Using the fact that $|e^{i \theta}| = 1$ for any $\theta \in \mathbb{R}$:
\[
    E_2(N) = \left| \sum_{m=N+1}^{\infty} a^m e^{i b^m \pi x} \right| \leq \sum_{m=N+1}^{\infty} a^m = \frac{a^{N+1}}{1-a}.
\]
Since $0 < a < 1$, the right-hand side is a null sequence, implying $\lim_{N \to \infty} E_2(N) = 0$ uniformly on $[-M, M]$.

\medskip
 Estimate of the Approximation Error ($E_1$).
From the previously established Global Approximation Error theorem, for $n_N \ge \max(\frac{4M}{\pi}, K_{\max})$, we have:
\[
    E_1(N) \leq \frac{e}{n_N} \mathcal{S}_1(N) + \frac{\sqrt{e}}{n_N^2} \mathcal{S}_2(N).
\]
We examine the asymptotic growth of $\mathcal{S}_1(N)$ and $\mathcal{S}_2(N)$. Recalling their definitions, see also \eqref{esse1}, \eqref{esse2}:
\begin{align*}
    \mathcal{S}_1(N) &= \frac{M^2}{2} \left[ \pi^2 \frac{(ab^2)^{N+1}-1}{ab^2-1} - \frac{1-a^{N+1}}{1-a} \right] \sim O\left((ab^2)^N\right), \\
    \mathcal{S}_2(N) &= 2M^3 \left[ \pi^3 \frac{(ab^3)^{N+1}-1}{ab^3-1} - \pi \frac{(ab)^{N+1}-1}{ab-1} \right] \sim O\left((ab^3)^N\right).
\end{align*}
For $b > 1$, the growth of $(ab^3)^N$ strictly dominates $(ab^2)^N$. Therefore, there exist positive constants $C_1, C_2$ such that for sufficiently large $N$:
\[
    E_1(N) \leq \frac{C_1 (ab^2)^N}{n_N} + \frac{C_2 (ab^3)^N}{n_N^2}.
\]
Since $n_N$ must grow at least as fast as $(ab^3)^N$ to satisfy \eqref{eq:growth_condition}, the term $1/n_N^2$ decays much faster than $1/n_N$. We can thus bound $E_1(N)$ by a single dominant term:
\[
    E_1(N) \leq C \frac{(ab^3)^N}{n_N},
\]
for some constant $C > 0$. By the hypothesis \eqref{eq:growth_condition}, it follows that $\lim_{N \to \infty} E_1(N) = 0$.

\medskip
Combining the two estimates into \eqref{eq:triangle_split}, we obtain:
\[
    \sup_{|x| \le M} \mathcal{E}(N) \leq C \frac{(ab^3)^N}{n_N} + \frac{a^{N+1}}{1-a}.
\]
As $N \to \infty$, both terms on the right-hand side vanish. Thus, the superoscillating sequence $\mathcal{W}_{N, n_N}(x)$ converges uniformly to the Weierstrass function $W(x)$ on $[-M, M]$.
\end{proof}

\subsection{Interpretation of the Stability Limit}

Consider the function:
\begin{equation} \label{eq:weierstrass}
    W(x) = \sum_{m=0}^{\infty} a^m e^{i b^m \pi x}.
\end{equation}
To approximate $W(x)$ using band-limited sequences, we define the superoscillating approximant of order $(N, n)$ as:
\begin{equation} \label{eq:super_weierstrass_def}
     \mathcal{W}_{N, n}(x)= \sum_{m=0}^{N} a^m \left[ \cos\left(\frac{x}{n}\right) + i (b^m \pi) \sin\left(\frac{x}{n}\right) \right]^n.
\end{equation}
Applying the \textit{Joint Convergence Theorem} to the function $\mathcal{W}_{N, n}(x)$ requires a delicate balance between the spectral resolution $N$ (which also corresponds to the truncation of the series expressing the Weierstrass function) and the computational complexity $n$.

Uniform convergence as $N, n \to \infty$ is governed by the ratio between the oscillation index $n_N$ and the spectral growth $(ab^3)^N$. Formally, convergence is guaranteed if this ratio is infinitesimal in the limit. By choosing, for example, $n_N = N(ab^3)^N$ we obtain:
\begin{equation} \label{eq:experiment_condition}
    \lim_{N \to \infty} \frac{(ab^3)^N}{n_N} = \lim_{N \to \infty} \frac{1}{N} = 0.
\end{equation}

The concept of the \textit{Divergence Wall} marks the transition between numerical stability and exponential explosion. This critical threshold is defined by the asymptotic condition $n_N \sim (ab^3)^N$. The nature of the convergence is determined by the behavior of the ratio $R_N = \frac{(ab^3)^N}{n_N}$:

\medskip
 {\em Sub-Critical Regime ($R_N \to \infty$):} The complexity $n$ grows too slowly relative to the sampled frequencies. The approximant enters a region of exponential instability where the error diverges instantaneously.

\medskip
{\em Critical Regime ($R_N \to C > 0$):} The system sits on a knife-edge. The error remains bounded, but the sequence fails to converge uniformly to zero, thus failing to capture the fractal details.

\medskip
{\em Super-Critical Regime ($R_N \to 0$):} Complexity dominates spectral growth. The superoscillatory structure effectively suppresses the growth of high-frequency terms, ensuring uniform convergence on any compact interval $[-M, M]$.

\begin{figure}[htbp]
    \centering
    \includegraphics[width=0.7\textwidth]{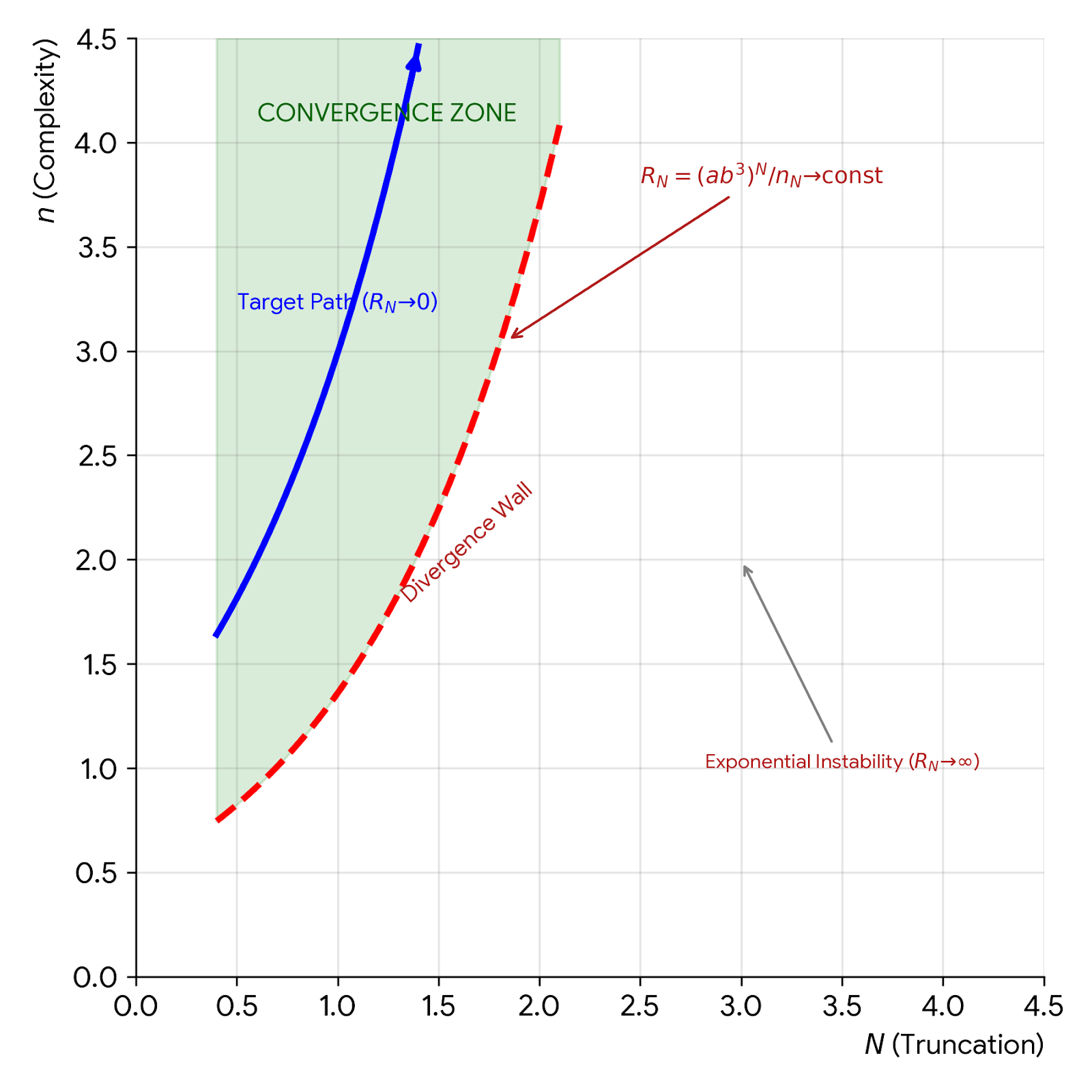}
    \caption{Stability phase diagram showing the Divergence Wall and the region of uniform convergence.}
    \label{fig:stability_diagram}
\end{figure}
\section{Concluding Remarks}\label{SECREM}

The results obtained in this paper provide a Bernstein-type approximation of the Weierstrass function.
A natural extension for future research is the study of approximations based on
Lagrange-type superoscillations. This approach is more complicated due to the nature of Lagrange interpolation.

The aim of our future work is to replace the high-frequency exponentials $e^{i \alpha_m x}$ in the Weierstrass function, with functions whose spectrum is strictly contained within the band-limited interval $[-1, 1]$ and is given by Lagrange-type superoscillations.

\medskip

Such superoscillations arise when we consider a set
 $\{h_j(n)\}_{j=0}^n$, $n \in \mathbb{N}$,  of distinct nodes in $[-1,1]$ and $\alpha \in \mathbb{R}$ such that $|\alpha| > 1$. In \cite{ACSSST21}, we introduced the sequence
 \begin{equation}
     T_n(x; \alpha)= \sum_{j=0}^n \left[ \prod_{k=0, k \neq j}^n \left( \frac{h_k(n) - \alpha}{h_k(n) - h_j(n)} \right) \right] e^{i h_j(n) x}.
\end{equation}
and in \cite{BOREL} we proved that $T_n(x; \alpha)$ is superoscillating and converges to $e^{i\alpha x}$.

  The next natural step consists in substituting each exponential term $e^{i b^m \pi x}$ in $W_N(x)$ with the superoscillating sequence $T_n(x; b^m \pi)$, and to investigate the superoscillating approximation of the truncated Weierstrass function, denoted by $\mathcal{W}_{N,n}(x)$, defined as:
\begin{equation}
    \mathcal{W}_{N,n}(x) = \sum_{m=0}^{N} a^m T_n(x; b^m \pi).
\end{equation}
Using the Lagrange type coefficients, the explicit form of the approximation is:
\begin{equation}
    \mathcal{W}_{N,n}(x) = \sum_{m=0}^{N} a^m \sum_{j=0}^n \left[ \prod_{k=0, k \neq j}^n \left( \frac{h_k(n) - b^m \pi}{h_k(n) - h_j(n)} \right) \right] e^{i h_j(n) x}.
\end{equation}
The key difficulty in this investigation will arise from the fact that we do not have a closed form expression for the Lagrange type approximation of superoscillations.

\medskip\medskip
{\bf Conflict of Interest}. The authors declare that they have no competing interests regarding the publication of this paper.

{\bf Author contributions}. All authors contributed equally to the study, read and approved the final version of the submitted manuscript.

{\bf Availability of data}. There are no data associated with the research in this paper.

\end{document}